\documentclass[a4paper,11pt]{article}

\usepackage[T1]{fontenc}
\usepackage{fullpage}%
\usepackage[utf8]{inputenc}%
\usepackage[main=english]{babel}%
\usepackage{graphicx}%
\usepackage{url}%
\usepackage{listings}%
\usepackage{amsmath}%
\usepackage{amssymb}%
\usepackage{amsfonts}%
\usepackage{stmaryrd}%
\usepackage{amsthm}%
\usepackage{array}%
\usepackage{csquotes}%
\usepackage{amsmath}
\usepackage{amssymb}
\usepackage{pifont}
\usepackage{xcolor} 
\usepackage{framed} 
\usepackage{mdframed}
\usepackage{hyperref}

\def\RR{{\mathbb R}}
\def\NN{{\mathbb N}}
\def\ZZ{{\mathbb Z}}
\def\PP{{\mathbb P}}

\def\sig{{\sigma}}
\def\lbd{{\lambda}}
\def\usup{{u^+}}

\theoremstyle{definition}
\newmdtheoremenv{theo}{Theorem}
\newmdtheoremenv{coro}{Corollary}
\newmdtheoremenv{defi}{Definition}
\newmdtheoremenv{prop}{Proposition}

\newtheorem{property}{Property}

\begin{document}

\title{An example of failure of stochastic homogenization for viscous Hamilton-Jacobi equations without convexity}
\author{William M. Feldman \and Jean-Baptiste Fermanian \and Bruno Ziliotto}
\date{}
\maketitle
\begin{abstract}
We give an example of the failure of homogenization for a viscous Hamilton-Jacobi equation with non-convex Hamiltonian.
\end{abstract}
\section*{Introduction}
We consider the problem of stochastic homogenization for viscous Hamilton-Jacobi equations. Let $H : \RR^n \times \RR^n  \times \Omega \to \RR$ ($n\geq 1$) which is a stationary ergodic random field on the probability space $(\Omega, \mathcal{F}, \PP)$. To that function $H$, named Hamiltonian, we associate an equation called Hamilton-Jacobi equation
\begin{equation}\label{eq_intro}
\left\{ \begin{array}{ll}
            \partial_t u^{\varepsilon} (t,x,\omega) + H\left( \nabla u^{\varepsilon}(t,x,\omega), \frac{x}{\varepsilon}, \omega\right) - \varepsilon \Delta u^{\varepsilon} = 0  & \; \text{in} \; (0, + \infty ) \times \RR^n \\
            u^{\varepsilon}(0,x)  = u_0(x) & \; \text{in} \; \RR^n
          \end{array} \right.
\end{equation}
where $\varepsilon > 0$.

The question of stochastic homogenization is to study the convergence properties of $u^\varepsilon$, as $\varepsilon$ goes to $0$.  When $H$ is periodic (Lions, Papanicolaou and Varadhan \cite{Lions}), when $H$ is convex (Souganidis \cite{Souganidis} and Rezakhanlou and Tarver \cite{Rezakhanlou}), or when $H$ is positively homogeneous in the gradient variable and the law of $H$ satisfies a finite range of dependence condition (Armstrong and Cardaliaguet 2015 \cite{Cardaliaguet}), $u^\varepsilon$ converges $\PP$-almost surely and locally uniformly in $(t,x)$ to the unique solution of a system of the form :
\begin{equation}\label{eq_intro2}
\left\{ \begin{array}{ll}
            \partial_tu(t,x,\omega) + \bar{H}( \nabla u(t,x)) = 0  & \; \text{in} \; (0, + \infty ) \times \RR^n \\
            u(0,x)  = u_0(x) & \; \text{in} \; \RR^n
          \end{array} \right.
\end{equation}
where $\bar{H}$ is called the effective Hamiltonian. In the non-viscous case, dimension at least $2$ and general stationary ergodic environment, homogenization may fail without the convexity assumption (work of the third author \cite{ziliotto}).

The question of the convergence for the viscous Hamilton-Jacobi equation without convexity has remained open.  Several recent works have given examples of non-convex Hamiltonians where homogenization holds \cite{Cardaliaguet,Davini,KosyginaYilmazZeitouni,yilmaz2019}.   In this note we extend the work of the third author~\cite{ziliotto} and show that homogenization may fail for \eqref{eq_intro}, with all the standard assumptions of the literature except convexity.
\begin{theo}
There exists a Hamiltonian $h: \RR^2 \to \RR$, Lipschitz continuous and coercive, and a stationary and ergodic random field $c: \RR^2 \to \RR$, bounded and Lipschitz regular, such that homogenization fails for \eqref{eq_intro} with $H(p,x) = h(p) - c(x)$ and initial data $u(0,x) = 0$.
\end{theo}
Our example is in dimension $n=2$, and the construction is for a specific form of Hamiltonian.  However, as in the work of the first author and Souganidis~\cite{Feldman}, the example can be extended to the following generality:  Suppose that $n \geq 2$ and $h: \RR^n \to \RR$ is Lipschitz continuous and coercive and has a strict saddle point at some $p_0 \in \RR^n$.  Then there exists a stationary and ergodic random field $c: \RR^n \to \RR$, bounded and Lipschitz regular, such that homogenization fails for \eqref{eq_intro} with $H(p,x) = h(p) - c(x)$ and initial data $u(0,x) = p_0 \cdot x$.

\subsection*{Literature}
The problem of homogenization for Hamilton-Jacobi equation in random environments has been studied quite intensively over the last decade, following earlier pioneering works.  Generally speaking the first order case has been studied first, the viscous case is usually more difficult and analogous results follow later.  This began with the work of Souganidis \cite{Souganidis} and of Rezakhanlou and Tarver \cite{Rezakhanlou} who independently proved homogenization in the case of $H$ convex in general stationary ergodic random media.  The literature is vast, see for example \cite{LSV1}, \cite{Gao1}, and \cite{KosyginaYilmazZeitouni} for more thorough reviews of the history of the subject (especially with focus on the viscous case).

In the viscous case homogenization results have been proven, all in the convex setting, by Lions and Souganidis \cite{LSV1,LSV2}, Kosygina, Rezakhanlou and Varadhan \cite{KRV}, Kosygina and Varadhan \cite{KV}, Armstrong and Souganidis \cite{ASunbounded}, Armstrong and Tran \cite{ArmstrongTran}.

There has been a lot of interest to push these homogenization results further to truly non-convex settings.

 In the first order $1$-dimensional case homogenization holds without any convexity assumption.  This was proven in a series of works started by Armstrong, Tran and Yu~\cite{ArmstrongTranYu1,ArmstrongTranYu2} (for Hamiltonians in separated form) and completed by Gao~\cite{Gao1} (general coercive Hamiltonians in $d=1$).  These techniques generalize to some higher dimensional problems with a special structure, see Gao~\cite{Gao2}.

 In the viscous case in $1$-d several special families of nonconvex Hamiltonians have been considered.  Davini and Kosygina~\cite{Davini2017} considered Hamiltonians with one or more ``pinning points" $p$ where $H$ is constant almost surely, and convex in between.  Y{\i}lmaz and Zeitouni \cite{yilmaz2019} (discrete case) and Kosygina, Y{\i}lmaz, and Zeitouni \cite{KosyginaYilmazZeitouni} (continuous case) have proven homogenization for a special nonconvex Hamiltonian $H(p,x) = \frac{1}{2}|p|^2-c|p|+V(x,\omega)$.

 A fundamental limitation to these efforts was discovered by the third author \cite{ziliotto}, who found an example of a first order non-convex Hamilton-Jacobi equation for which homogenization fails.  Following that, the first author and Souganidis \cite{Feldman} found that a similar example will cause non-homogenization for any Hamiltonian having a strict saddle point.  In the current work we resolve the same question in the viscous case, showing that homogenization does not hold, by adapting the construction of \cite{ziliotto}. Like in \cite{ziliotto}, the idea of the example comes from a zero-sum differential game.
Our construction is also simpler than in the former paper, and is symmetric in players' roles. Thus, this paper illustrates once more how dynamic games can be helpful in PDE problems, as already emphasized in \cite{KS06,KS10,IS11, Z19}.

\section{Preliminaries}
The Hamiltonian will always satisfy the following properties :
\begin{itemize}
  \item $p \to H(p,.,.)$ has superlinear growth
  \item $p \to H(p,.,.)$ is Lipschitz
  \item The law of $\omega \to H(.,.,\omega)$ is invariant by $\RR^d$ translations
\end{itemize}
Under these assumptions, the system (\ref{eq_intro}) admits a unique solution of viscosity $u^\varepsilon$ which is measurable with respect to $\omega$.

As will be made clear below, actually $H(p,x,\omega) = h(p) - c_\omega(x)$, and $h$ can be chosen as an arbitrary coercive Hamiltonian in the region $\|p\|_\infty \geq 2$.  In particular superlinear growth is not a requirement of the example, it is only emphasized as it is a usual assumption in the literature proving positive results on homogenization.

 \section{The weight function $c_\omega$}

Let $E$ be the set of 1-Lipschitz mappings from $\RR^2$ to $[-\frac{1}{2},\frac{1}{2}]$. Let us build a probability measure on $E$ in the following way.
Let $(T_k)_{k \geq 1}$ the sequence defined by $T_k = 2^k$. Let $(X^j_{k,l,m})_{(j,k,l,m) \in \{1,2\} \times \NN^* \times \ZZ^2}$ a sequence of independent random variables defined on a probability space $( \Omega, \mathcal{F}, \mathbb{P})$ such that for all $(j,k,l,m) \in \{1,2\} \times \NN^* \times \ZZ^2$, $X^j_{k,l,m}$ follows a Bernoulli of parameter $T_k^{-2}$.

Fix some $\lambda$, $\mu >0$. For $\omega \in \Omega$, the mapping $c_\omega : \RR^2 \to [-\frac{1}{2},\frac{1}{2}] \in E$ is built in three phases.

\paragraph{Phase 1}

The mapping  $c^1_\omega : \RR^2 \to [-\frac{1}{2},\frac{1}{2}] $ is built through the following step-by-step procedure.
\begin{itemize}
  \item Step $k=0$: take $c^1_\omega :=0$ as the initial distribution of weights.
  \item Step $k \geq 1$ : for each $(l,m) \in \ZZ^2$ such that $X^1_{k,l,m} =1$, consider the rectangle centered on $(l,m)$ with length $\lambda T_k +1$ and width $\mu+1$, which shall be called \textit{horizontal rectangle of length $\lambda T_k +1$}. For each $x\in \RR^2$  that lies in the rectangle, set $c^1_\omega(x) := -\frac{1}{2}$.
\end{itemize}
 At the end of phase 1 we have a map $c^1_\omega : \RR^2 \to [-\frac{1}{2},\frac{1}{2}] $.

 \paragraph{Phase 2}

 The mapping  $c^2_\omega : \RR^2 \to [-\frac{1}{2},\frac{1}{2}]$ is built through the following step-by-step procedure.
\begin{itemize}
  \item Step $k=0$: take $c^2_\omega := c^1_\omega$ as the initial distribution of weights.
  \item Step $k \geq 1$ : for each $(l,m) \in \ZZ^2$ such that $X^2_{k,l,m} =1$, consider the vertical rectangle centered on $(l,m)$ with length $\lambda T_k +1$ and width $\mu +1$, which shall be called \textit{vertical rectangle of length $\lambda T_k +1$}. For each $x\in \RR^2$  that lies in the rectangle, we distinguish three cases :
      \begin{itemize}
        \item If $x$ lies in a horizontal rectangle of size $\lambda T_{k'} +1$ with $k' > k$, $c^2_\omega(x)$ is not modified.
        \item If $x$ lies in a horizontal rectangle of size $\lambda T_{k'} +1$ with $k' = k$, $c^2_\omega(x)=0$.
        \item Otherwise, set $c^2_\omega(x) = \frac{1}{2}$.
      \end{itemize}
\end{itemize}
The idea behind this phase is to keep the largest rectangle when two rectangles of different orientations intersect each other. The intersection "turns vertical" if the vertical rectangle's length is strictly larger, and "turns horizontal" if it is strictly smaller. If the rectangles have the same length, the intersection has a null weight.

\begin{figure}[t]
\centering
\includegraphics[width=.7\linewidth]{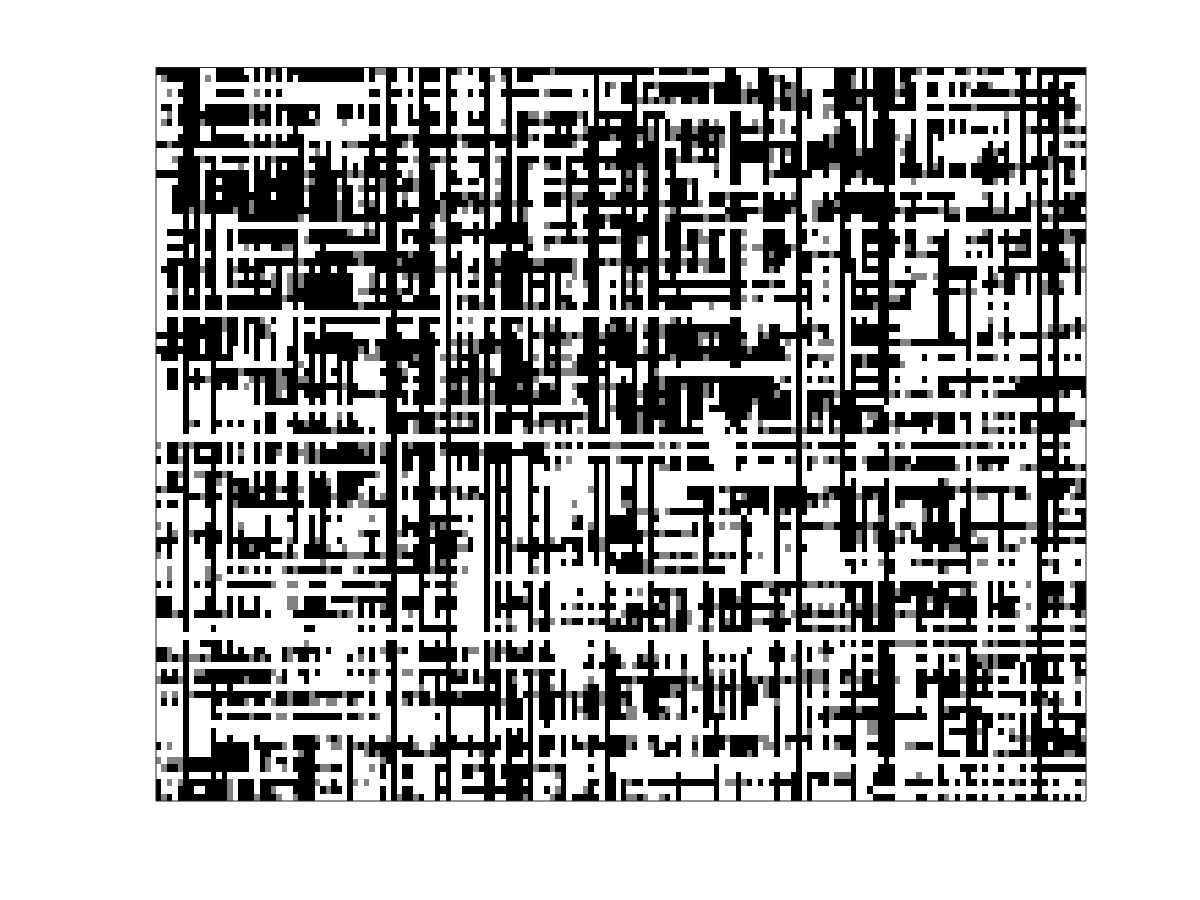}
\caption{An approximate sample environment $c^2_\omega$.}
\end{figure}

\paragraph{Phase 3}
In this phase we just make $c^2_\omega$ a Lipschitz function. Define $c_\omega : \RR^2 \to [-\frac{1}{2}, \frac{1}{2}]$ by:
\begin{equation*}
  c_\omega(x) = \left\{ \begin{array}{cc}
  \max \left(\underset{y \in \RR^2}{\inf} \{ c^2_\omega(y) + |x-y| \}, 0 \right) & \; \text{if} \; c^2_\omega(x) \geq 0 \\
                  \min \left(\underset{y \in \RR^2}{\sup} \{ c^2_\omega(y) -  |x-y| \}, 0 \right) & \; \text{if} \; c^2_\omega(x) \leq 0
                \end{array} \right.
\end{equation*}
Because $c^2_\omega$ takes value in $\{ -\frac{1}{2}, 0 , \frac{1}{2} \}$ we have $\textup{Sgn}(c^2_\omega) = 2c^2_\omega$. So a compact expression of $c_\omega$ is :
$$c_\omega(x) := 2c^2_\omega(x) \max \left( \underset{y \in \RR^2}{\inf} \{2c^2_\omega(x) c^2_\omega(y) + |x-y| \}, 0 \right). $$
It is straightforward from the above expression that $c_{\omega}$ is Lipschitz.
\\

We will name \textit{complete vertical rectangle of length $\lambda T_k +1$ } a vertical rectangle of length $\lambda T_k+1$ which for each $x\in \RR^2$  that lies in the rectangle at a distance larger than $\frac{1}{2}$ of the edges, we have $c_\omega(x) = \frac{1}{2}$. We will say the same for a horizontal rectangle such that $c_\omega(x) = -\frac{1}{2}$.

\section{Main result}
Let $\eta \in (0,1/2)$ and $\lambda, \mu>0$ such that
\begin{equation} \label{cond_eta}
\min(\lambda,\mu) \geq \frac{16}{\eta}.
\end{equation}
Define
\[ H(p,x,\omega ) = - c_\omega(x) + |p_2| - |p_1| + \max \left( ||p||_\infty -2,0 \right) ^q\]
 with $q>1$.  {Note that the definition of $H$ in $\|p\|_\infty \geq 2$ is basically arbitrary.  With our specific choice it holds that, for all $\omega \in \Omega$, $H$ has superlinear growth in $p$, uniformly in $x$ and $\omega$. }

 For $\varepsilon > 0$, consider the following Hamilton-Jacobi equation :
\begin{equation}\label{homo}
\left\{ \begin{array}{ll}
          \partial_t u(t,x,\omega) + H( \nabla u(t,x,\omega) , \frac{x}{\varepsilon}, \omega) - \varepsilon\Delta u = 0 & \text{in} \; (0,\infty )  \times \RR^2  \\
          u(0,x) = 0 & \text{in} \; \RR^2.
         \end{array}
\right.
\end{equation}

\begin{theo}\label{resultat} Let $u^\varepsilon$ be the solution of (\ref{homo}). Then the following holds almost surely:
\begin{align*}
 \underset{\varepsilon \to 0}{\liminf}\, u^\varepsilon(1,0,\omega) \leq -\frac{1}{2} + \eta\\
 \underset{\varepsilon \to 0}{\limsup}\, u^\varepsilon(1,0,\omega) \geq  \frac{1}{2} - \eta.
\end{align*}
Consequently, there is no stochastic homogenization for the above Hamilton-Jacobi equation.
\end{theo}

\section{Game-theoretic interpretation} \label{game}
The Hamilton-Jacobi equation of the previous subsection can be associated to a zero-sum stochastic differential game. We give here an informal description of this game, in which technical details are avoided, notably concerning the definition of strategies. For simplicity, we consider the case where $\lambda$ and $\mu$ are large, and $\eta$ is small.

Let $\omega \in \Omega$ and $T>0$. The game starts at the origin $(0,0)$, and has a duration $T$. Player 1 (resp. 2) aims at minimizing (resp. maximizing) the total cost between time 0 and time $T$, given by $\int_{0}^T c_{\omega}(x(t)) dt$, where $x(t)$ is the state of the game at stage $t$.

The dynamics of the state is such that if Player 1 chooses a control $a \in [-1,1]$ and Player 2 chooses a control $b \in [-1,1]$, then the state moves according to the vector $(b,a)$, with a Brownian motion perturbation. Thus, Player 1 controls the vertical component of the state, while Player 2 controls the horizontal component. The normalized value of the game with duration $T$ coincides with $u^{\frac{1}{T}}(1,0,\omega)$, where $u^{\frac{1}{T}}$ is the solution of the system (\ref{homo}), for $\epsilon=1/T$.

For all $x \in \RR$, denote by $\lfloor x \rfloor$ the integer part of $x$. The construction of $(c_{\omega})_{\omega \in \Omega}$ has been made such that for all $\delta>0$, there exist two positive probability events $\Omega_1$ and $\Omega_2$ such that the following properties hold:
\begin{property}
For all $\omega \in \Omega_1$, there exists a sequence $(n_k(\omega))$ going to infinity such that for all $k \geq 1$, there exists a complete horizontal rectangle of length $\lambda T_{n_k(\omega)}+1$ whose center is at a distance smaller or equal to $\lfloor \delta T_{n_k(\omega)} \rfloor$ from the origin.
\end{property}
\begin{property}
For all $\omega \in \Omega_2$, there exists a sequence $(n'_k(\omega))$ going to infinity such that for all $k \geq 1$, there exists a complete vertical rectangle of length $\lambda T_{n_k(\omega)}+1$ whose center is at a distance smaller or equal to $\lfloor \delta T_{n'_k(\omega)} \rfloor$ from the origin.
\end{property}
Let $k \geq 1$ and $\omega \in \Omega_1$. Consider the game with duration $T_{n_k(\omega)}$. Making use  of vertical controls, Player 1 can force the state to go close to the center of the complete horizontal rectangle, within a length of time smaller or equal to $\lfloor \delta T_{n_k(\omega)} \rfloor$. Then, he can force it to stay most of the time in the rectangle until the end of the game, by making use only of vertical controls. The Brownian motion will push the state outside the rectangle from time to time, but since the width of the rectangle is large, this will not happen often. Player two can push horizontally with unit speed, but it will take time approximately $\lambda T_k \gg T_k$ before the state leaves the rectangle.  Thus, for $\delta$ small enough, the normalized value of the game with duration $T_{n_k(\omega)}$ is close to $-1/2$.

By symmetry, for $k \geq 1$ and $\omega \in \Omega_2$, the normalized value of the game with duration $T_{n'_k(\omega)}$ is close to $1/2$. By ergodicity of the environment, these arguments prove the theorem, and the next section is dedicated to the formal proof.

\section{Proof of the theorem}
\subsection{Presentation of the proof}
For simplicity, we consider the following Hamilton-Jacobi equation:
\begin{equation}\label{homo2}
\left\{ \begin{array}{ll}
          \partial_t u(t,x,\omega) + H( \nabla u(t,x,\omega) , x, \omega) - \Delta u = 0 & \text{in} \; (0,\infty ) \times \RR^2  \\
          u(0,x) = 0 & \text{in} \; \RR^2.
         \end{array}
\right.
\end{equation}
The solution $u$ of the above equation satisfies $u(t,x,\omega)=u^{\varepsilon}(\varepsilon t, \varepsilon x, \omega)/\varepsilon$, for all $(t,x,\omega) \in (0,\infty) \times \mathbb{R}^2 \times \Omega$. Thus, it is equivalent to study the limit of $u^\varepsilon(1,0,\omega)$ and of $\varepsilon u\left(\frac{1}{\varepsilon},0, \omega\right)$, as $\varepsilon$ vanishes.

\subsection{Supersolution of the Hamilton-Jacobi equation}
\subsubsection{The event}
Let ${\delta}> 0$ and $B_k$ the event "there exists a center of a complete horizontal rectangle of length $\lambda T_k +1$ at a distance smaller or equal to $\lfloor {\delta} T_k \rfloor$ from the origin".

\begin{prop}
There exists a positive probability event $\Omega_1 \subset \Omega$ such that for all $\omega \in \Omega_1$, the events $(B_k)_{k\geq 1 }$ occur infinitely often.

\end{prop}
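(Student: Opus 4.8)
The plan is to prove the stronger statement that $(B_k)_{k \geq 1}$ occurs with probability one, which a fortiori produces a positive-probability event $\Omega_1$. The argument splits into two parts: a lower bound $\PP(B_k) \geq c_* > 0$ uniform in large $k$, and a zero--one law upgrading $\liminf_k \PP(B_k) > 0$ to ``$B_k$ infinitely often, almost surely''. First I would isolate a clean \emph{sufficient} condition for a complete horizontal rectangle at a fixed lattice center $(l,m)$ at scale $k$: that $X^1_{k,l,m}=1$ (the horizontal rectangle is present) and that no vertical rectangle of scale $k'' \geq k$ meets it, after enlarging the rectangle by a fixed constant to absorb the Phase 3 smoothing. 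By the Phase 2 rules a scale-$k''$ vertical with $k'' \geq k$ is exactly what can overwrite a scale-$k$ horizontal (to $0$ if $k''=k$, to $\tfrac12$ if $k''>k$), while verticals of scale $<k$ leave the interior at $-\tfrac12$; so this event does force completeness on the interior. Importantly, it is measurable with respect to $\sigma(X^1_{k,\cdot}) \vee \sigma(X^2_{k'',\cdot}\!: k'' \geq k)$.

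Next I would estimate the two independent factors. The presence factor is $\PP(X^1_{k,l,m}=1) = T_k^{-2}$. For non-invasion, the lattice centers of scale-$k''$ verticals that can meet the fixed horizontal number $\asymp (\lambda T_k + \mu)(\lambda T_{k''}+\mu)$, each occupied independently with probability $T_{k''}^{-2}=4^{-k''}$; the scale-$k''$ contribution to the expected number of invaders is then $\asymp \lambda^2 2^{\,k-k''}$, and $\sum_{k'' \geq k} \lambda^2 2^{\,k-k''} \to 2\lambda^2$ as $k \to \infty$. Since each invader has probability $\leq 4^{-k}$, independence of the $X^2$ gives $\PP(\text{no invader}) = \prod (1-4^{-k''}) \geq \exp(-C\lambda^2) =: c_0 > 0$, uniformly in large $k$. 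This is where I expect the main obstacle to lie: because comparable-scale verticals invade with $\Theta(1)$ probability, completeness is only a \emph{constant}-probability event, so one cannot push $\PP(B_k) \to 1$, and the whole construction must be designed to tolerate a merely positive, $k$-independent success probability.

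I would then convert this per-center bound into a lower bound on $\PP(B_k)$ by a second-moment argument. Let $N_k$ count the complete horizontal rectangles of scale $k$ with center in the disk of radius $\lfloor \delta T_k \rfloor$; there are $\asymp \delta^2 T_k^2$ admissible centers, so $\EE N_k \geq c_0\, \delta^2 T_k^2\, T_k^{-2} = c_0 \delta^2(1+o(1))$ stays bounded below. For the second moment, two distinct centers require two independent occurrences of $X^1=1$, hence $\EE N_k^2 \leq \EE N_k + (\#\text{centers})^2 T_k^{-4} = O(\delta^2 + \delta^4)$, and Paley--Zygmund gives $\PP(B_k) \geq (\EE N_k)^2/\EE N_k^2 \geq c_* > 0$ for all large $k$.

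To finish, recall that the sufficient event for $B_k$ depends only on scales $\geq k$, so $\limsup_k B_k$ is a tail event of the independent family $(X^j_{k,l,m})$. Kolmogorov's zero--one law forces $\PP(\limsup_k B_k) \in \{0,1\}$, while reverse Fatou gives $\PP(\limsup_k B_k) \geq \limsup_k \PP(B_k) \geq c_* > 0$; hence $B_k$ occurs infinitely often almost surely and $\Omega_1$ may be taken of full, in particular positive, probability. The only routine residual check is that enlarging the exclusion neighbourhood by a fixed constant really does control the Phase 3 smoothing near the edges without altering any of the orders of magnitude above.
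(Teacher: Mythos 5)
Your argument is correct and in fact proves more than the Proposition asks, but it diverges from the paper's proof in two places worth noting. For the lower bound $\liminf_k \PP(B_k)>0$, the paper does not use a second moment: it writes the sufficient event as $C_k\cap D_k$, where $C_k$ is ``some lattice point within $\lfloor\delta T_k\rfloor$ of the origin is selected at step $k$ of Phase~1'' and $D_k$ is the non-invasion event for the corresponding rectangle, computes $\PP(C_k)\ge 1-(1-T_k^{-2})^{\lfloor\delta T_k\rfloor^2}\to 1-e^{-\delta^2}$ by a one-line complement-of-intersection bound, and then bounds $\PP(D_k\mid C_k)$ below by exactly the product over scales $k'\ge k$ that you estimate, with the same limiting constant $e^{-2\lambda^2}$; so your invasion count is identical in substance, and your Paley--Zygmund detour through the counting variable $N_k$ is sound but strictly more machinery than needed. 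The second divergence is the conclusion: the paper stops at $\PP(\limsup_k B_k)\ge \liminf_k\PP(B_k)>0$ via monotonicity of $n\mapsto\PP(\bigcup_{k\ge n}B_k)$, since positive probability is all the Proposition asserts (the almost-sure statement of the Theorem is recovered later from the environment's ergodicity); your upgrade via Kolmogorov's zero--one law is a genuine strengthening and is correctly set up --- crucially you apply it to the \emph{sufficient} sub-events, which for $k\ge n$ are measurable with respect to the independent blocks $\sigma(X^1_{k,\cdot},X^2_{k'',\cdot}:k''\ge k\ge n)$ and hence land in the tail $\sigma$-algebra, whereas $B_k$ itself depends on all scales through the Phase~2 and Phase~3 rules and would not obviously do so. One caution: the ``routine residual check'' you defer is less routine than you suggest, and the paper glosses over it too. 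Verticals of scale $k''<k$ that cross the horizontal rectangle leave its interior at $-\tfrac12$ but create $c^2_\omega=\tfrac12$ zones immediately outside its long edges, and such crossings cannot be excluded with positive probability along a rectangle of length $\lambda T_k$; since a value $\tfrac12$ at distance $d<1$ forces $c_\omega(x)>-\tfrac12$, completeness with the stated margin $\tfrac12$ can fail near the boundary, so the margin (or the portion of the rectangle actually used downstream) must be taken at least $1$ rather than adjusted by ``a fixed constant'' chosen casually.
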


\begin{proof}
  A sufficient condition for $B_k$ to be realized is that the two following events, respectively denoted $C_k$ and $D_k$, are realized:

  \begin{itemize}
    \item At step $k$ of Phase 1, a point at a distance smaller than $\lfloor{\delta} T_k \rfloor $ has been selected by the Bernoulli random variable.
    \item The horizontal rectangle centered on this point is complete, that is, it is not intersected by a vertical rectangle of length larger or equal than $\lambda T_k +1 $.
  \end{itemize}
  We have
  \begin{align*}
   \PP(C_k) &\geq \PP(\overline{\underset{l,m \leq \lfloor {\delta} T_k \rfloor}{\bigcap} \overline{X^1_{k,l,m}}  }) &\\
     & \geq 1 - \prod_{l,m \leq \lfloor {\delta} T_k \rfloor}(1 - \PP(X^1_{k,l,m})) & \text{by independence} \\
     & \geq 1 - (1 - T_k^{-2} )^{\lfloor {\delta} T_k \rfloor^2} \\
     & \underset{k \to +\infty}{\longrightarrow} 1 - e^{-{\delta}^2}.
  \end{align*}

   For $k' \geq k$, the probability that no vertical rectangle of length $\lambda T_{k'}$ intersects the horizontal rectangle is bounded below by the probability that no point in the rectangle with the same center, length $(\lambda T_{k'} +1 + \mu +1)$, and width $(\lambda T_k +1 +\mu +1) $ has been selected by the Bernoulli random variable at step $k'$ during Phase 2, that is,
  $$(1-T_{k'}^{-2})^{(\lambda T_{k'} +\mu +2)(\lambda T_k + \mu+2)}.$$
  Thus
  $$\PP(D_k | C_k) \geq \prod_{k' \geq k}(1-T_{k'}^{-2})^{(\lambda T_{k'}+\mu +2)(\lambda T_k + \mu+2)}.$$
 We have the asymptotic equivalence
  \begin{align*}
    (\lambda T_{k'}+\mu+2)(\lambda T_k + \mu+2)\ln(1 -T_{k'}^{-2})  & \underset{k' \to +\infty}{\sim} -\lambda^2T_{k'}^{-1}T_k.
  \end{align*}
 So
 \begin{align*}
   \sum_{k' \geq k} (\lambda T_{k'}+\mu+2)(\lambda T_k + \mu+2)\ln(1 -T_{k'}^{-2}) &  \underset{k \to +\infty}{\sim} -\lambda^2T_k \sum_{k' \geq k}T_{k'}^{-1} \\
    &  \underset{k \to +\infty}{\sim} -\lambda^2T_k T_{k-1}^{-1} \\
    & \underset{k \to +\infty}{\longrightarrow} -2\lambda^2.
 \end{align*}
 That gives us a minoration of $\liminf \PP(B_k)$
  \begin{align*}
 \displaystyle{\liminf_{k \to \infty} \PP(B_k)} &  \geq  \displaystyle{\liminf_{k \to \infty} \PP(C_k)\PP(D_k | C_k)}  \\
     & \geq (1 - e^{-{\delta}^2})e^{-2\lambda^2} \\
     & > 0.
  \end{align*}
  Because
  \begin{align*}
  \PP( \limsup B_k) &= \underset{n \to +\infty}{\lim} \PP \left( \underset{k\geq n}{\bigcup} B_k \right)\\
   &\geq \underset{n \to +\infty}{\lim} \underset{ k \geq n}{\inf} \PP(B_k)\\
   & > 0
   \end{align*}
  we obtain the result.
\end{proof}

\subsubsection{The supersolution}

On the event $B_k$ we construct a supersolution of the Hamilton-Jacobi equation which controls the growth rate up to time $T_k$. Since we assume $B_k$ occurs this means that there is a horizontal rectangle of length $\lambda T_k +1$ centered at a point $X = (X_1,X_2)$ with $|X| \leq {\delta} T_k$.

Define $\sigma_1 := \frac{\lambda T_k}{\sqrt{2}}$ and $\sigma_2 := \frac{\mu}{\sqrt{2}}$. Recall that $\lambda T_k+1$ and $\mu$ are the dimensions of the complete horizontal rectangle and that we have
$\lambda, \mu> \frac{16}{\eta}$ (see (\ref{cond_eta})).

Let us now define
$$h_i : x \in \RR \mapsto (x - X_i)(\mathbb{P}(P_i < x) - \mathbb{P}(P_i > x)) \; \text{ for } \; i= 1,2$$
where $P_i$ follows the normal distribution $ \mathcal{N}(X_i,\sigma_i^2)$ for  $i=1,2$. Now define $u^+ \, : \, (0,T_k) \times \RR^2 \to \RR$ by
\[
  u^+(t,x) :=  t(-\frac{1}{2}+\eta) + 2t\frac{h_1(x_1)}{\lambda T_k} + h_2(x_2).
\]
We aim to prove that $u$ is a supersolution on $t\in(0,T_k)$:
$$\partial_t \usup + H( \nabla \usup , x, \omega) -\Delta \usup \geq 0.$$

Let us motivate briefly the choice of $u^+$. Think of $\eta$ as being small.
In the game-theoretic interpretation, a supersolution is an upper bound on the value of the differential game described in Section \ref{game}. Thus, considering the strategy of Player 1 that pushes the state to the complete horizontal rectangle and keeps in it as often as possible, and estimating its cost gives an idea of what the supersolution should be.
Under this strategy, the state is most of the time in the rectangle, where the cost is $-1/2$. This explains the term $t(-1/2+\eta)$. The second term reflects the fact that the strategy of Player 2, pushing outward horizontally from the center of the rectangle, and the Brownian motion may make the state leave the rectangle by one of its lateral sides. Thus, this term takes into account the fact that while the state is out, the cost rises. If the Brownian motion makes the state leave the rectangle by one of its vertical edges, Player 1 will push the state back in the direction of the rectangle. This extra cost is contained inside $\eta$. Further, the game may begin near the rectangle and not inside, so Player 1 pushes the state to the rectangle. The third term reflects the cost of this action.

Let us now check formally that $u^+$ is indeed a supersolution. We have
\begin{align*}
\nabla \usup (t,x)&=\left(2t\frac{h_1'(x_1)}{\lambda T_k},h_2'(x_2)\right) \\
  \partial_{t} \usup (t,x) &= -\frac{1}{2} + \eta + 2\frac{h_1(x_1)}{\lambda T_k}
 \end{align*}
 and
$$\Delta \usup (t,x) =  2t\frac{h_1''(x_1)}{\lambda T_k} +  h_2''(x_2). $$
We have
\begin{align*}
    h_i'(x) = (\mathbb{P}(P_i < x) - \mathbb{P}(P_i > x)) + 2(x-X_i)f_{P_i}(x)  \\
    h_i''(x) = 2f_{P_i}(x)\left(2 - \frac{(x-X_i)^2}{\sigma_i^2}\right).
\end{align*}
Where $f_{P_i}$ is the density of $P_i$
$$f_{P_i}(x) = \frac{1}{\sqrt{2\pi}\sigma_i} \exp\left(-\frac{(x-X_i)^2}{2\sigma_i^2} \right).$$
Note that $ h_i'$ decreases on $(-\infty,X_i -\sqrt{2}\sigma_i)$, increases on
$(X_i -\sqrt{2}\sigma_i,X_i+\sqrt{2}\sigma_i)$, decreases on $(X_i+\sqrt{2}\sigma_i,+\infty)$, and attains its minimum and maximum respectively in $X_i -\sqrt{2}\sigma_i$ and
$X_i+\sqrt{2}\sigma_i$. It follows that for all $x \in \RR$,
\begin{equation*}
 | h_i'(x) | \leq \max\left(|h_i' (X_i-\sqrt{2}\sig_i)|,|h_i' (X_i+\sqrt{2}\sig_i)|\right) \leq 2.
\end{equation*}
Since $\displaystyle \lim_{x \rightarrow-\infty} h'_i(x)=-1$ and $\displaystyle \lim_{x \rightarrow+\infty} h'_i(x)=1$, when $|x - X_i| > \sqrt{2}\sigma_i $ we have
$$| h_i'(x) | \geq 1.$$
Last, note that for all $x \in \RR$, $h_i(x) \geq 0$, and when $|x_1-X_1|>\sqrt{2} \sigma_1=\lambda T_k$,
\begin{align*}
  h_1(x_1)&=  (x_1-X_1) \left( \PP(P_1 < x_1) - \PP(P_1 > x_1) \right)\\
  &\geq  |x_1-X_1| (1 - \PP(|P_1 -X_1| > \lambda T_k)) \\
  & \geq \lambda T_k \left(1 - \frac{\sig_1^2}{\lambda^2 T_k^2} \right) \\
  & \geq \frac{\lambda T_k}{2}.
\end{align*}

Thus, the following hold for all $x \in \RR^2$:
\begin{align}
 \partial_t \usup &\geq -\frac{1}{2}+\eta
 \\
 | \partial_{x_1} \usup (t,x) | &\leq \frac{4}{\lambda}
 \\
  \partial^2_{x_1} \usup (t,x) &\leq \frac{8}{\lambda \sqrt{2 \pi} \sigma_1} \leq \frac{4}{\lambda}
  \\
    \partial^2_{x_2} \usup (t,x) &
   \leq \frac{4}{\sqrt{2\pi}\sigma_2} \leq \frac{4}{\mu}
 \end{align}
Moreover, when $|x_1-X_1|>\sqrt{2} \sigma_1=\lambda T_k$, we have
\begin{align*}
 \partial_t \usup &\geq \frac{1}{2}+\eta
 \\
 \partial^2_{x_1} \usup (t,x) &\leq 0
\end{align*}
Last, when $|x_2-X_2|>\sqrt{2} \sigma_2$, we have
\begin{align*}
 \partial^2_{x_2} \usup (t,x) &\leq 0
 \\
 | \partial_{x_2} \usup | &\geq 1
\end{align*}
By the above properties, $||\nabla \usup ||_\infty \leq 2$ and $H(x,\nabla u^+, \omega)  = -c_\omega(x) + |\partial_{x_2} \usup | - |\partial_{x_1} \usup |$.

 Let $(t,x) \in (0,T_k) \times  \RR^2$. We distinguish between the following cases:

 \begin{itemize}
   \item \textbf{Case 1 : $|x_2-X_2| > \sqrt{2} \sigma_2 $ and $|x_1 - X_1 | > \sqrt{2} \sigma_1$ :}

We have
\begin{align*}
  \partial_t \usup + H( \nabla \usup , x, \omega) -\Delta \usup & \geq \left(\frac{1}{2} + \eta  \right) + \left(  -\frac{1}{2} + 0 - \frac{4}{\lambda} \right)  -0 \geq 0
  \end{align*}

\item \textbf{Case 2 : $|x_2-X_2| > \sqrt{2} \sigma_2 $ and $|x_1 - X_1 | < \sqrt{2} \sigma_1$ :}

We have
\begin{align*}
  \partial_t \usup + H( \nabla \usup , x, \omega) -\Delta \usup & \geq
  \left(-\frac{1}{2} + \eta\right) + \left(-\frac{1}{2} +  1 -\frac{4}{\lambda}\right) -\left(0+\frac{4}{\lambda}\right) \geq 0
\end{align*}

\item \textbf{Case 3 : $|x_2-X_2| < \sqrt{2} \sigma_2 $ and $|x_1 - X_1 | > \sqrt{2} \sigma_1$ :}

We have
\begin{align*}
  \partial_t \usup + H( \nabla \usup , x, \omega) -\Delta \usup  & \geq \left(\frac{1}{2} + \eta \right) + \left( -\frac{1}{2} +  0 -\frac{4}{\lambda} \right) -\frac{4}{\mu} \geq 0
  \end{align*}
\item \textbf{Case 4 : $|x_2-X_2| < \sqrt{2} \sigma_2 $ and $|x_1 - X_1 | < \sqrt{2} \sigma_1$ :}
In this case, we have $c_\omega(x) = -\frac{1}{2}$ because $x$ is in a complete horizontal rectangle.
Thus
\begin{align*}
  \partial_t \usup + H( \nabla \usup , x, \omega) -\Delta \usup  & \geq \left(-\frac{1}{2} + \eta \right) + \left( \frac{1}{2} + 0- \frac{4}{\lambda} \right) -\frac{4}{\lambda}-\frac{4}{\mu} \geq 0
  \end{align*}
\end{itemize}
Consequently $\usup$ is a supersolution. The comparison principle implies that for all $\omega \in \Omega_{1}$ we have
$$u(T_k,0,\omega) \leq \usup(T_k,0) \leq  T_k(-\frac{1}{2}+\eta) + 2T_k\frac{|X_1|}{\lambda T_k} + |X_2| \leq  T_k(-\frac{1}{2}+\eta) + T_k\frac{\lfloor {\delta} T_k \rfloor}{\lambda T_k} + \lfloor {\delta} T_k \rfloor .$$
Thus
$$\underset{T \to + \infty}{\liminf}\frac{1}{T}u(T,0,\omega) \leq \underset{k \to +\infty}{\lim} \frac{1}{T_k} \usup(T_k,0) = -\frac{1}{2} + \eta +\frac{{\delta}}{\lbd} + {\delta} \leq -\frac{1}{2} + \eta + 2{\delta}. $$
This is true for all ${\delta} > 0$, thus for all $\omega \in \Omega_{1}$ we have
$$\underset{T \to + \infty}{\liminf}\frac{1}{T}u(T,0,\omega) \leq -\frac{1}{2} + \eta .$$

\subsection{Subsolution of the Hamilton-Jacobi equation}

The construction of the subsolution is symmetrical to the one of the supersolution. For ${\delta}> 0$, we define $B'_k$ the event "there exists a center of a complete vertical rectangle of length $\lambda T_k +1$ at a distance smaller or equal to $\lfloor {\delta} T_k \rfloor$ from the origin." There exists a positive probability event $\Omega_2 \subset \Omega$ such that for all $\omega \in \Omega_2$, the events $(B'_k)_{k\geq 1 }$ occur infinitely often.
Because the map $c_\omega$ is symmetric the proof is the same as the supersolution case.

We can make a bijection between the sets $\Omega_1$ and $\Omega_2$. For each event $\omega_1$ in $\Omega_1$ we associate an event $\omega_2$ in $\Omega_2$ where the map is the one of $\omega_1$ turning of $90^{\circ}$ around the origin and inverting the values on the rectangles. This event is in $\Omega_2$ because the complete horizontal rectangle at a distance $\lfloor {\delta} T_k \rfloor$ of the origin becomes a complete vertical rectangle at the same distance of the origin. Note that the map is its own inverse.

Let us denote $\widehat{x} := (x_2,-x_1)$ for $x =(x_1,x_2) \in \RR^2$. Because $c_{\omega_2}(x) = -c_{\omega_1}(\widehat{x})$, we have for $||p||_\infty < 2$
$$H(x,p,\omega_2) = c_{\omega_2}(x) + |p_2| - |p_1| = - \left( c_{\omega_1}(\widehat{x}) + |p_1| - |p_2| \right)= - H(\widehat{x},\widehat{p},\omega_1).$$
If $X(\omega_2)$ is the center of complete vertical rectangle near the origin, we have $X(\omega_2) = \widehat{X(\omega_1)}$ where $X(\omega_1)$ is the center of the horizontal rectangle in the associate map of $\Omega_1$.

So if $u^+(t,x)$ is a supersolution on  $\Omega_1$ then $ u^-(t,x) := -u^+(t,\widehat{x})$ is a subsolution on $\Omega_2$. Indeed, $ \widehat{\nabla  u^- }(t,x) = -\nabla u^+(t,\widehat{x})$, $\Delta u^-(t,x) = -\Delta u^+(t,\widehat{x})$ and if $ \omega_2 \in \Omega_2$ there exists $\omega_1 \in \Omega_1$ such that
\begin{align*}
\partial_t u^-(t,x) + H(x,\nabla u^-(t,x),\omega_2) - \Delta u^-(t,x) &= - \left[ \partial_t u^+(t,\widehat{x}) + H(\widehat{x},\widehat{\nabla u^-}(t,x),\omega_1) - \Delta u^+(t,\widehat{x}) \right] \\
&= - \left[ \partial_t u^+(t,\widehat{x}) + H(\widehat{x},\nabla u^+(t,\widehat{x}),\omega_1) - \Delta u^+(t,\widehat{x}) \right] \\
&\leq 0.
\end{align*}
Thus for all $\omega \in \Omega_{2}$ we have
$$\underset{T \to + \infty}{\limsup}\frac{1}{T}u(T,0,\omega) \geq \frac{1}{2} -\eta.$$

\subsection*{Acknowledgments}
  The first author was partially supported by the National Science Foundation RTG grant DMS-1246999. The third author benefited from the support of the FMJH Program PGMO VarPDEMFG (ANR-11-LABX-0056-LMH) and from the support of EDF, Thales, Orange and Criteo. The authors would also like to thank Pierre Cardaliaguet, Sylvain Sorin and Takis Souganidis for helpful advice.

  \bibliographystyle{plain}
\bibliography{HJarticles}

\begin{thebibliography}{10}

\bibitem{Cardaliaguet}
Scott~N. Armstrong and Pierre Cardaliaguet.
\newblock Stochastic homogenization of quasilinear {H}amilton-{J}acobi
  equations and geometric motions.
\newblock {\em J. Eur. Math. Soc. (JEMS)}, 20(4):797--864, 2018.

\bibitem{ASunbounded}
Scott~N. Armstrong and Panagiotis~E. Souganidis.
\newblock Stochastic homogenization of {H}amilton-{J}acobi and degenerate
  {B}ellman equations in unbounded environments.
\newblock {\em J. Math. Pures Appl. (9)}, 97(5):460--504, 2012.

\bibitem{ArmstrongTran}
Scott~N. Armstrong and Hung~V. Tran.
\newblock Stochastic homogenization of viscous {H}amilton-{J}acobi equations
  and applications.
\newblock {\em Anal. PDE}, 7(8):1969--2007, 2014.

\bibitem{ArmstrongTranYu1}
Scott~N. Armstrong, Hung~V. Tran, and Yifeng Yu.
\newblock Stochastic homogenization of a nonconvex {H}amilton-{J}acobi
  equation.
\newblock {\em Calc. Var. Partial Differential Equations}, 54(2):1507--1524,
  2015.

\bibitem{ArmstrongTranYu2}
Scott~N. Armstrong, Hung~V. Tran, and Yifeng Yu.
\newblock Stochastic homogenization of nonconvex {H}amilton-{J}acobi equations
  in one space dimension.
\newblock {\em J. Differential Equations}, 261(5):2702--2737, 2016.

\bibitem{Davini}
Andrea Davini and Elena Kosygina.
\newblock Homogenization of viscous and non-viscous {HJ} equations: a remark
  and an application.
\newblock {\em Calc. Var. Partial Differential Equations}, 56(4):Art. 95, 21,
  2017.

\bibitem{Davini2017}
Andrea Davini and Elena Kosygina.
\newblock Homogenization of viscous and non-viscous hj equations: a remark and
  an application.
\newblock {\em Calculus of Variations and Partial Differential Equations},
  56(4):95, Jun 2017.

\bibitem{Feldman}
William~M. Feldman and Panagiotis~E. Souganidis.
\newblock Homogenization and non-homogenization of certain non-convex
  {H}amilton-{J}acobi equations.
\newblock {\em J. Math. Pures Appl. (9)}, 108(5):751--782, 2017.

\bibitem{Gao1}
Hongwei Gao.
\newblock Random homogenization of coercive {H}amilton-{J}acobi equations in
  1d.
\newblock {\em Calc. Var. Partial Differential Equations}, 55(2):Art. 30, 39,
  2016.

\bibitem{Gao2}
Hongwei {Gao}.
\newblock {Stochastic homogenization of certain nonconvex Hamilton-Jacobi
  equations}.
\newblock {\em arXiv e-prints}, page arXiv:1803.08633, March 2018.

\bibitem{IS11}
Cyril Imbert and Silvia Serfaty.
\newblock Repeated games for non-linear parabolic integro-differential
  equations and integral curvature flows.
\newblock {\em Discrete Contin. Dyn. Syst}, 29(4):1517--1552, 2011.

\bibitem{KS06}
Robert Kohn and Silvia Serfaty.
\newblock A deterministic-control-based approach motion by curvature.
\newblock {\em Communications on Pure and Applied Mathematics}, 59(3):344--407,
  2006.

\bibitem{KS10}
Robert Kohn and Silvia Serfaty.
\newblock A deterministic-control-based approach to fully nonlinear parabolic
  and elliptic equations.
\newblock {\em Communications on Pure and Applied Mathematics},
  63(10):1298--1350, 2010.

\bibitem{KRV}
Elena Kosygina, Fraydoun Rezakhanlou, and S.~R.~S. Varadhan.
\newblock Stochastic homogenization of {H}amilton-{J}acobi-{B}ellman equations.
\newblock {\em Comm. Pure Appl. Math.}, 59(10):1489--1521, 2006.

\bibitem{KV}
Elena Kosygina and S.~R.~S. Varadhan.
\newblock Homogenization of {H}amilton-{J}acobi-{B}ellman equations with
  respect to time-space shifts in a stationary ergodic medium.
\newblock {\em Comm. Pure Appl. Math.}, 61(6):816--847, 2008.

\bibitem{KosyginaYilmazZeitouni}
Elena {Kosygina}, Atilla {Yilmaz}, and Ofer {Zeitouni}.
\newblock {Homogenization of a class of one-dimensional nonconvex viscous
  Hamilton- Jacobi equations with random potential}.
\newblock {\em arXiv e-prints}, page arXiv:1710.03087, October 2017.

\bibitem{Lions}
Pierre-Louis Lions, George Papanicolaou, and Srinivasa Varadhan.
\newblock Homogenization of hamilton-jacobi equation.
\newblock {\em unpublished prepring}, 1987.

\bibitem{LSV2}
Pierre-Louis Lions and Panagiotis~E. Souganidis.
\newblock Homogenization of ``viscous'' {H}amilton-{J}acobi equations in
  stationary ergodic media.
\newblock {\em Comm. Partial Differential Equations}, 30(1-3):335--375, 2005.

\bibitem{LSV1}
Pierre-Louis Lions and Panagiotis~E. Souganidis.
\newblock Stochastic homogenization of {H}amilton-{J}acobi and
  ``viscous''-{H}amilton-{J}acobi equations with convex
  nonlinearities---revisited.
\newblock {\em Commun. Math. Sci.}, 8(2):627--637, 2010.

\bibitem{Rezakhanlou}
Fraydoun Rezakhanlou and James~E. Tarver.
\newblock Homogenization for stochastic {H}amilton-{J}acobi equations.
\newblock {\em Arch. Ration. Mech. Anal.}, 151(4):277--309, 2000.

\bibitem{Souganidis}
Panagiotis~E. Souganidis.
\newblock Stochastic homogenization of {H}amilton-{J}acobi equations and some
  applications.
\newblock {\em Asymptot. Anal.}, 20(1):1--11, 1999.

\bibitem{yilmaz2019}
Atilla Yilmaz and Ofer Zeitouni.
\newblock Nonconvex homogenization for one-dimensional controlled random walks
  in random potential.
\newblock {\em Ann. Appl. Probab.}, 29(1):36--88, 02 2019.

\bibitem{ziliotto}
Bruno Ziliotto.
\newblock Stochastic homogenization of nonconvex {H}amilton-{J}acobi equations:
  a counterexample.
\newblock {\em Comm. Pure Appl. Math.}, 70(9):1798--1809, 2017.

\bibitem{Z19}
Bruno Ziliotto.
\newblock Convergence of the solutions of the discounted hamilton-jacobi
  equation: a counterexample.
\newblock {\em Published online in Journal de Math{\'e}matiques Pures et
  Appliqu{\'e}es}, 2019.

\end{thebibliography}

{\it E-mail address:} \href{mailto:feldman@math.uchicago.edu}{\nolinkurl{feldman@math.uchicago.edu}}

\smallskip

\small{\sc Department of Mathematics, The University of Chicago, Chicago, IL 60637, USA}

\medskip

{\it E-mail address:} \href{mailto:jean-baptiste.fermanian@ens-rennes.fr}{\nolinkurl{jean-baptiste.fermanian@ens-rennes.fr}}

\smallskip

\small{\sc ENS Rennes, 35170 Bruz, France}

\medskip

{\it E-mail address:} \href{mailto:ziliotto@math.cnrs.fr}{\nolinkurl{ziliotto@math.cnrs.fr}}

\smallskip

\small{\sc CEREMADE, CNRS, Universit\'e Paris Dauphine, PSL Research Institute, Paris, France}

\end{document}